\begin{document}

\newtheorem{thm}{Theorem}
\newtheorem{lem}[thm]{Lemma}
\newtheorem{claim}[thm]{Claim}
\newtheorem{cor}[thm]{Corollary}
\newtheorem{prop}[thm]{Proposition}
\newtheorem{definition}[thm]{Definition}
\newtheorem{rem}[thm]{Remark}
\newtheorem{question}[thm]{Open Question}
\newtheorem{conj}[thm]{Conjecture}
\newtheorem{prob}{Problem}

\newtheorem{lemma}[thm]{Lemma}

\newcommand{\GL}{\operatorname{GL}}
\newcommand{\SL}{\operatorname{SL}}
\newcommand{\lcm}{\operatorname{lcm}}
\newcommand{\ord}{\operatorname{ord}}
\newcommand{\Op}{\operatorname{Op}}
\newcommand{\Tr}{\operatorname{Tr}}
\newcommand{\Nm}{\operatorname{Nm}}

\numberwithin{equation}{section}
\numberwithin{thm}{section}
\numberwithin{table}{section}

\def\vol {{\mathrm{vol\,}}}
\def\squareforqed{\hbox{\rlap{$\sqcap$}$\sqcup$}}
\def\qed{\ifmmode\squareforqed\else{\unskip\nobreak\hfil
\penalty50\hskip1em\null\nobreak\hfil\squareforqed
\parfillskip=0pt\finalhyphendemerits=0\endgraf}\fi}

\def \balpha{\bm{\alpha}}
\def \bbeta{\bm{\beta}}
\def \bgamma{\bm{\gamma}}
\def \blambda{\bm{\lambda}}
\def \bchi{\bm{\chi}}
\def \bphi{\bm{\varphi}}
\def \bpsi{\bm{\psi}}
\def \bomega{\bm{\omega}}
\def \btheta{\bm{\vartheta}}

\newcommand{\bfxi}{{\boldsymbol{\xi}}}
\newcommand{\bfrho}{{\boldsymbol{\rho}}}

\def\Kab{\sfK_\psi(a,b)}
\def\Kuv{\sfK_\psi(u,v)}
\def\SaUV{\cS_\psi(\balpha;\cU,\cV)}
\def\SaAV{\cS_\psi(\balpha;\cA,\cV)}

\def\SUV{\cS_\psi(\cU,\cV)}
\def\SAB{\cS_\psi(\cA,\cB)}

\def\Kmnp{\sfK_p(m,n)}

\def\KKap{\cH_p(a)}
\def\KKaq{\cH_q(a)}
\def\KKmnp{\cH_p(m,n)}
\def\KKmnq{\cH_q(m,n)}

\def\Klmnp{\sfK_p(\ell, m,n)}
\def\Klmnq{\sfK_q(\ell, m,n)}

\def \SALMNq {\cS_q(\balpha;\cL,\cI,\cJ)}
\def \SALMNp {\cS_p(\balpha;\cL,\cI,\cJ)}

\def \SACXMQX {\fS(\balpha,\bzeta, \bxi; M,Q,X)}

\def\SAMJp{\cS_p(\balpha;\cM,\cJ)}
\def\SAMJq{\cS_q(\balpha;\cM,\cJ)}
\def\SAqMJq{\cS_q(\balpha_q;\cM,\cJ)}
\def\SAJq{\cS_q(\balpha;\cJ)}
\def\SAqJq{\cS_q(\balpha_q;\cJ)}
\def\SAIJp{\cS_p(\balpha;\cI,\cJ)}
\def\SAIJq{\cS_q(\balpha;\cI,\cJ)}

\def\RIJp{\cR_p(\cI,\cJ)}
\def\RIJq{\cR_q(\cI,\cJ)}

\def\TWXJp{\cT_p(\bomega;\cX,\cJ)}
\def\TWXJq{\cT_q(\bomega;\cX,\cJ)}
\def\TWpXJp{\cT_p(\bomega_p;\cX,\cJ)}
\def\TWqXJq{\cT_q(\bomega_q;\cX,\cJ)}
\def\TWJq{\cT_q(\bomega;\cJ)}
\def\TWqJq{\cT_q(\bomega_q;\cJ)}

 \def \xbar{\overline x}
  \def \ybar{\overline y}

\def\cA{{\mathcal A}}
\def\cB{{\mathcal B}}
\def\cC{{\mathcal C}}
\def\cD{{\mathcal D}}
\def\cE{{\mathcal E}}
\def\cF{{\mathcal F}}
\def\cG{{\mathcal G}}
\def\cH{{\mathcal H}}
\def\cI{{\mathcal I}}
\def\cJ{{\mathcal J}}
\def\cK{{\mathcal K}}
\def\cL{{\mathcal L}}
\def\cM{{\mathcal M}}
\def\cN{{\mathcal N}}
\def\cO{{\mathcal O}}
\def\cP{{\mathcal P}}
\def\cQ{{\mathcal Q}}
\def\cR{{\mathcal R}}
\def\cS{{\mathcal S}}
\def\cT{{\mathcal T}}
\def\cU{{\mathcal U}}
\def\cV{{\mathcal V}}
\def\cW{{\mathcal W}}
\def\cX{{\mathcal X}}
\def\cY{{\mathcal Y}}
\def\cZ{{\mathcal Z}}
\def\Ker{{\mathrm{Ker}}}

\def\NmQR{N(m;Q,R)}
\def\VmQR{\cV(m;Q,R)}

\def\Xm{\cX_m}

\def \A {{\mathbb A}}
\def \B {{\mathbb A}}
\def \C {{\mathbb C}}
\def \F {{\mathbb F}}
\def \G {{\mathbb G}}
\def \L {{\mathbb L}}
\def \K {{\mathbb K}}
\def \N {{\mathbb N}}
\def \PP {{\mathbb P}}
\def \Q {{\mathbb Q}}
\def \R {{\mathbb R}}
\def \Z {{\mathbb Z}}
\def \fS{\mathfrak S}

\def \fP{\mathfrak P}

\def\e{{\mathbf{\,e}}}
\def\ep{{\mathbf{\,e}}_p}
\def\eq{{\mathbf{\,e}}_q}

\def\\{\cr}
\def\({\left(}
\def\){\right)}
\def\fl#1{\left\lfloor#1\right\rfloor}
\def\rf#1{\left\lceil#1\right\rceil}

\def\Tr{{\mathrm{Tr}}}
\def\Nm{{\mathrm{Nm}}}
\def\Im{{\mathrm{Im}}}

\def \oF {\overline \F}

\newcommand{\pfrac}[2]{{\left(\frac{#1}{#2}\right)}}

\def \Prob{{\mathrm {}}}
\def\e{\mathbf{e}}
\def\ep{{\mathbf{\,e}}_p}
\def\epp{{\mathbf{\,e}}_{p^2}}
\def\em{{\mathbf{\,e}}_m}

\def\Res{\mathrm{Res}}
\def\Orb{\mathrm{Orb}}

\def\vec#1{\mathbf{#1}}
\def \va{\vec{a}}
\def \vb{\vec{b}}
\def \vn{\vec{n}}
\def \vu{\vec{u}}
\def \vv{\vec{v}}
\def \vw{\vec{w}}
\def \vz{\vec{z}}
\def\flp#1{{\left\langle#1\right\rangle}_p}
\def\sM {\mathsf {M}}

\def\md{{\sf{m.d.}}}

\def\sfG {\mathsf {G}}
\def\sfK {\mathsf {K}}

\def\rad{\mathrm{rad}}
 \newcommand{\Fp}{\mathbb F_p}
 \newcommand{\Gal}{\operatorname{Gal}}

\def\mand{\qquad\mbox{and}\qquad}



\title[Solvable  $\cS$-unit equations and recurrences   with zeros]{Counting solvable $\cS$-unit equations and linear recurrence sequences with zeros}

\author[A. Ostafe] {Alina Ostafe}
\address{AO:  School of Mathematics and Statistics, University of New South Wales, Sydney, NSW 2052, Australia}
\email{alina.ostafe@unsw.edu.au}

\author[C. Pomerance]{Carl Pomerance}
\address{CP: Department of Mathematics, Dartmouth College, 
Hanover, NH 03755-3551, USA} 
\email{carlp@gauss.dartmouth.edu} 

\author[I. E. Shparlinski] {Igor E. Shparlinski}
\address{IS: School of Mathematics and Statistics, University of New South Wales, Sydney, NSW 2052, Australia}
\email{igor.shparlinski@unsw.edu.au}

\begin{abstract} 
{We show that only a rather small proportion of linear equations are solvable in elements 
of a fixed finitely generated subgroup of a multiplicative group of a number field. The argument is based on 
modular techniques combined with a classical idea of P.~Erd{\H o}s~(1935). We then use similar ideas
to get a tight upper bound on the number of linear recurrence sequences which attain a zero value. }
\end{abstract}

\subjclass[2020]{11B37, 11D45, 11D61}

\keywords{Linear equations, $\cS$-units, finitely generated groups, linear recurrence sequences, zeros}

\maketitle
%
%

\section{Motivation and set-up}

Recently, there has been several works counting soluble (globally or locally) polynomial Diophantine 
equations in various families, see~\cite{AkBha,Brow,BrDi,BLePS,DiMa,KPSS, LLY,Lou} and references therein.

Here we address a similar question for families of linear equations in elements of finitely generated groups, 
which are also known as {\it $\cS$-unit equations\/}, we refer to~\cite{EvGy} for background.  

Namely, let  $\Gamma \subseteq \K^*$ be a finitely generated multiplicative subgroup of $\K^*$, where $\K$ is a number field 
of degree $d = [\K:\Q]$ over $\Q$.

We also fix an integral basis $\omega_1, \ldots, \omega_d$ of the ring of integers $\Z_\K$ of $\K$, and for 
an integer $H \ge 0$
we consider the set 
\[
\cA(H) = \{\alpha= u_1\omega_1+ \ldots + u_d \omega_d:~
u_i \in [-H,H] \cap\Z,\ i =1, \ldots, d\}. 
\]
Clearly, $\cA(H)$ is of cardinality $\#\cA(H) = \(2H+1\)^d$.

Finally, we denote by $Z_k(\Gamma, H)$ the number of $k$-tuples of coefficients $\(\alpha_1, \ldots, \alpha_k\) \in \cA(H)^k$, 
 such that the equation 
\begin{equation}
\label{eq:S-unit Eq}
\alpha_1\vartheta_1 + \ldots + \alpha_k \vartheta_k = 0, \qquad \vartheta_1,  \ldots, \vartheta_k \in \Gamma,
\end{equation}
has a solution.  
Our first main result, Theorem \ref{thm:Zeros}, estimates $Z_k(\Gamma,H)$ with
a power savings.

We note that the question of estimating  $Z_k(\Gamma, H)$ is  somewhat 
dual to the scenario of~\cite{ShSt} where, for $\K = \Q$ and $k=3$, the coefficients 
are fixed but $\Gamma$ varies among  groups generated by $r$ primes in a given interval. 

We also use similar ideas to bound  the number of  linear recurrence sequences, 
which have a zero  in their value set.

Let 
\begin{equation}
\label{eq:f}
f(X) = X^k -  c_{k-1} X^{k-1} -\ldots -  c_{0} \in \Z[X], \quad c_0\ne0,
\end{equation}
and let $\cL_f$ denote the set  of all  linear recurrence sequences
\[
\vu = \(u(j)\)_{j=1}^\infty
\] 
with the characteristic polynomial  $f$, that is, with
\begin{equation}
\label{eq:LRS}
u(j+k) = c_{k-1} u(j+k-1) +\ldots +  c_{0} u(j) , \qquad j =1,2, \ldots,
\end{equation}
and integer  initial values $u(1), \ldots, u(k)$ not all zero. 

If there are no roots of unity among   the ratios
of distinct roots of its characteristic polynomial $f$, then all   sequences $\vu\in \cL_f$ are called 
{\it non-degenerate\/}. 

By the classical  Skolem--Mahler--Lech theorem, any  non-degenerate  linear recurrence sequence contains only
finitely many  zeros (see~\cite{AmVia} for the strongest known bound). Hence, there is an integer $n_0>0$, depending only on $\vu$, such that $u(n) \ne 0$  for all $n\ge n_0$. 

It is also easy to see that ``typical'' polynomials $f$ correspond to non-degenerate 
linear recurrence sequences, thus having a zero is a rare  event. Our second main result, Theorem~\ref{thm:Zeros-LRS}, 
implies that in fact 
typically   linear recurrence sequences $\vu  \in \cL_f$ (whether degenerate or not) 
do not have zeros at all.

For $U \ge 1$,  we give an upper bound on   the number $Z_f(U)$ of   linear recurrence sequences
$\vu  \in \cL_f$ with   integer  initial values $\(u(1), \ldots, u(k)\) \in [-U,U]^k$ for which $u(n) = 0$ 
for some $n$. 

Our approach to bounding $Z_k(\Gamma, H)$ and   $Z_f(U)$ is based on a modular technique
and also on generating a reasonably dense sequence of  integers with small values of the Carmichael $\lambda$-function
and composed from arbitrary sets of primes of positive relative density, see Lemma~\ref{lem:Small-lambda}
below.  (The Carmichael $\lambda$-function at a positive integer $n$ returns the exponent
of the group $(\Z/n\Z)^*$.) 

The argument we use dates back to work of Erd{\H o}s~\cite{Erd}; it has also been used in 
various modifications in a number of other works, see, for example,~\cite{EPS}.  

We also note that in the case of $Z_f(U)$, surprisingly enough, the modular approach gives 
an essentially tight bound. 

\section{Main results}

We first give an upper bound on $Z_k(\Gamma, H)$ with a power savings.

We always assume that  $d$, $k$,  the  subgroup $\Gamma$
and the characteristic polynomial $f \in \Z[X]$ are fixed.  In particular all implied
constants and the functions denoted by the $o$-symbol may depend on them.

\begin{thm}
\label{thm:Zeros} Let $\K$ be a number field 
of degree $d = [\K:\Q]$ over $\Q$ and let 
$\Gamma \subseteq \K^*$  be a  finitely generated group.   Then, as
$H\to\infty$,
\[
Z_k(\Gamma, H) \le H^{dk- 1+ o(1)}.
\]
\end{thm}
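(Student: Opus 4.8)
The plan is to use a modular sieve. The key idea is that if the $\cS$-unit equation \eqref{eq:S-unit Eq} has a solution $\vartheta_1,\ldots,\vartheta_k\in\Gamma$, then reducing modulo a carefully chosen prime ideal $\fP$ of $\Z_\K$, the residues $\vartheta_i \bmod \fP$ lie in the image of $\Gamma$ in the residue field $(\Z_\K/\fP)^*$. If we pick $\fP$ above a rational prime $p$ so that $\Z_\K/\fP = \F_p$ (i.e. $p$ splits completely, or at least has a degree-one prime above it — there are positive density of such $p$), then the image of $\Gamma$ in $\F_p^*$ is a cyclic group whose order divides $\lambda$-type quantities, and crucially it is small whenever $p-1$ (or the relevant multiplicative order) has small ``Carmichael-type'' part. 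So the congruence $\alpha_1 t_1 + \ldots + \alpha_k t_k \equiv 0 \pmod p$ with each $t_i$ ranging over a subgroup $G_p \subseteq \F_p^*$ of size $g_p = \#G_p$ has at most $g_p^{k-1}\cdot (\text{something})$ solutions in the $t_i$, hence for each such system of residues the number of $(\alpha_1,\ldots,\alpha_k)\in\cA(H)^k$ mapping to it is roughly $(\#\cA(H)/p)^k$ times the number of admissible residue tuples, as long as $p$ is not much larger than $H$.

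More precisely, first I would fix, via Lemma~\ref{lem:Small-lambda}, a modulus built from primes of positive relative density with a small value of the Carmichael $\lambda$-function; concretely one wants a prime $p$ (or a prime ideal $\fP$ of degree one) lying in an interval around some parameter, say $p \asymp H^{1/k}$ or a bit larger, such that the image $G_p$ of $\Gamma$ in $\F_p^*$ has order at most $p^{o(1)}$, equivalently $\lambda$ of the relevant quantity is $p^{o(1)}$. The existence of many such $p$ — enough to have one in essentially any dyadic range — is exactly what Lemma~\ref{lem:Small-lambda} is designed to supply, after checking that the primes which split nicely in $\K$ and on which $\Gamma$ reduces injectively-enough form a set of positive relative density (the bad primes, e.g. those dividing the discriminant or the denominators/numerators of the generators of $\Gamma$, are finite in number). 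Then for that $p$: the number of residue tuples $(t_1,\ldots,t_k)\in G_p^k$ with $\sum \alpha_i t_i\equiv 0$ is at most $g_p^{k-1}$ (fix $t_1,\ldots,t_{k-1}$ freely, then $t_k$ is determined up to the constraint $t_k\in G_p$), wait — more carefully it is $O(g_p^{k-1})$ summed appropriately; and for each fixed residue vector $(r_1,\ldots,r_k)\in\F_p^k$ the count of $\alpha\in\cA(H)$ with $\alpha\equiv r_i\pmod\fP$ is $(2H+1)^d/p + O(H^{d-1})$-type (since $\cA(H)$ is a box in the lattice $\Z_\K$ and $\fP$ has norm $p$). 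Multiplying out, $Z_k(\Gamma,H) \le g_p^{k-1}\cdot \big((\#\cA(H))/p + O(H^{d-1})\big)^k \le H^{o(1)} \cdot H^{dk}/p^{k-1}\cdot(\ldots)$, and choosing $p$ as large as the sieve permits — i.e. $p = H^{1/(k-1)+o(1)}$ won't quite work, one needs to balance, so one actually wants $p$ somewhat larger than $H$ to beat the error term; the right choice makes the main term $H^{dk}/p^{k-1}$ and the total bound $H^{dk-1+o(1)}$ provided one can take $p \ge H^{1+o(1)}$ with $g_p \le H^{o(1)}$.

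So the key technical step is: produce a single prime $p$ (or degree-one prime ideal $\fP$) with $p$ just above $H$ — say $H \le p \le H^{1+o(1)}$ — such that the image of $\Gamma$ in $(\Z_\K/\fP)^*$ has cardinality $H^{o(1)}$. This reduces to finding $p-1$ with a large smooth/divisor structure forcing the order of a bounded set of generators to be small, and that is precisely the content of Lemma~\ref{lem:Small-lambda} applied to the density-positive set of primes that split completely in the compositum of $\K$ with the relevant Kummer-type extensions. When $p > H$, reduction mod $\fP$ is injective on each coordinate interval $[-H,H]\cap\Z$, so actually one gets the clean count: each of the $\le g_p^{k-1}$ admissible residue vectors lifts to at most one $\alpha$-tuple per coordinate — but then $p>H$ means we must also ensure there are enough $p$'s; taking $p\in(H, 2H]$ or in $(H,H^{1+\varepsilon}]$ suffices. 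The main obstacle I anticipate is exactly this: guaranteeing that the sieving prime can be chosen with $p$ only slightly larger than $H$ while simultaneously forcing $\#G_p$ to be $H^{o(1)}$; controlling the interplay between ``$p$ large enough that the box $\cA(H)$ has essentially one representative per residue class'' and ``the $\lambda$-value small enough to make $g_p$ tiny'' is the crux, and this is handled by Lemma~\ref{lem:Small-lambda}. The remaining parts — injective reduction of $\cA(H)$, counting solutions of the linear congruence in the subgroup, choosing degree-one primes to realize $\F_p$ — are routine.
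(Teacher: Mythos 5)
Your high-level picture (reduce the $\cS$-unit equation modulo primes of $\Z_\K$ with residue field $\F_p$, exploit that the image of $\Gamma$ in $\F_p^*$ can be forced small, and then count coefficient tuples landing in admissible residue classes) is the right flavor, and it matches the paper's broad strategy. But there is a genuine gap: you propose to sieve with a \emph{single} prime $p$ of size about $H$ for which the image $G_p$ of $\Gamma$ in $\F_p^*$ has order $H^{o(1)}$, and you assert that Lemma~\ref{lem:Small-lambda} supplies such a prime. It does not, and in fact no such prime can exist in general. For a single prime $p$, the exponent of $\F_p^*$ is $\lambda(p)=p-1$, which is of size $p\asymp H$, never $H^{o(1)}$. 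To force $\ord_p(\gamma_j)$ to be $H^{o(1)}$ for every generator $\gamma_j$ of $\Gamma$ you would need $p$ to divide $\gamma_j^{d}-1$ for some $d=H^{o(1)}$, and there are only $H^{o(1)}$ such primes up to $H$ — far too few to locate one in $((1-\varepsilon)H,H]$. The content of Lemma~\ref{lem:Small-lambda} is precisely the opposite of what you claim: it produces a \emph{composite squarefree} $n\asymp H$, each of whose prime factors $p$ is \emph{tiny} (indeed $p-1\mid M$ with $M=H^{o(1)}$, so $p\le M+1$), and it is only because $n$ is a product of many small primes that $\lambda(n)\le M=H^{o(1)}$ while $n$ itself is close to $H$. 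The paper then reduces the equation modulo every $p\mid n$ and patches the congruences together via the Chinese Remainder Theorem to get a single congruence modulo $n$; this composite-modulus step is the crux and cannot be replaced by a single well-chosen prime.

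There is a second, smaller error in the bookkeeping. Even granting a modulus $m$ (prime or composite) with $\lambda$-invariant $H^{o(1)}$, the count is not $g_m^{k-1}\cdot(\#\cA(H)/m)^k$. For each of the at most $\lambda(m)^{kr}$ choices of exponent residues (or, in your normalization, $g_m^{k-1}$ choices of the $\vartheta_i$-residues), one gets a \emph{nontrivial linear congruence} in the $k$ unknown coefficient residues $(a_1,\dots,a_k)\pmod m$, which has $m^{k-1}$ solutions, not one. So the number of admissible coefficient residue vectors is about $\lambda(m)^{kr}\,m^{k-1}$, and multiplying by $(H^d/m)^k$ per residue vector gives $\lambda(m)^{kr}\,H^{dk}/m$. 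With $m=n>(1-\varepsilon)H$ and $\lambda(n)=H^{o(1)}$ this yields $H^{dk-1+o(1)}$, as in the paper; your version, missing the factor $m^{k-1}$, would spuriously claim $H^{dk-k+o(1)}$, which is not what the method gives and is a sign that a term has been dropped. Finally, the remark that for $p>H$ the reduction ``is injective on each coordinate interval'' does not yield $O(1)$ preimages per residue class: the map $\cA(H)\to\Z_\K/\fP\cong\F_p$ is a single linear condition on the $d$ coordinates $u_1,\dots,u_d$, so even for $p>2H+1$ one still has $O(H^{d-1})$ preimages, not $O(1)$.
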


\begin{rem} 
\label{rem:S-unit Eq} 
{\rm Examining the proof of Theorem~\ref{thm:Zeros} one can notice that 
similar ideas can allow us to investigate equations with coefficients which are arbitrary 
algebraic numbers of the form $\alpha/\beta$ with $\alpha, \beta \in \cA(H)$, or 
of the form  $\alpha/b$ with $\alpha  \in \cA(H)$ and $b \in \{1, \ldots, H\}$. }
\end{rem}

A variation of the argument used in the proof of Theorem~\ref{thm:Zeros} 
also gives the following tight bounds.

\begin{thm}
\label{thm:Zeros-LRS} Let $f\in\Z[X]$ be defined by~\eqref{eq:f}.  
If $f$ is separable, then, as $U\to\infty$, 
\[
U^{k -1} \le Z_f(U) \le U^{k -1 + o(1)}.  
\]
\end{thm}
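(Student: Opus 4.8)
\textbf{Proof proposal for Theorem~\ref{thm:Zeros-LRS}.}

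The plan is to treat the lower and upper bounds separately, with the lower bound being essentially a construction and the upper bound relying on the same modular/Carmichael-$\lambda$ machinery as Theorem~\ref{thm:Zeros}. For the lower bound $Z_f(U) \ge U^{k-1}$: since $f$ is separable with $c_0 \ne 0$, its roots are distinct and nonzero, so the solution space of~\eqref{eq:LRS} with prescribed initial data is $k$-dimensional over $\Q$, and the ``generic zero'' condition singles out a proper subvariety. Concretely, I would fix $n = k$ (or any fixed index) and count initial vectors $(u(1),\dots,u(k)) \in [-U,U]^k$ with $u(k) = 0$ directly --- but $u(k)$ is already one of the initial values, so simply taking $u(k) = 0$ and $u(1),\dots,u(k-1)$ arbitrary in $[-U,U]$, with not all of them zero, yields $\gg U^{k-1}$ sequences in $\cL_f$ having a zero at $n = k$. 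This gives $Z_f(U) \ge (2U+1)^{k-1} - 1 \ge U^{k-1}$ for $U$ large, which already matches the claimed lower order of magnitude.

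For the upper bound $Z_f(U) \le U^{k-1+o(1)}$, I would argue as follows. The recurrence~\eqref{eq:LRS} together with the initial values $u(1),\dots,u(k)$ determines the whole sequence, and reduction modulo a prime $p \nmid c_0$ makes $(u(j) \bmod p)_j$ a linear recurrence sequence over $\F_p$ whose underlying shift operator acts invertibly on the state vectors; hence the reduced sequence is purely periodic with period dividing the order of the companion matrix of $f$ in $\GL_k(\F_p)$, which in turn divides $\lcm$ of the multiplicative orders of the roots of $f$ over $\F_p$, a quantity bounded by $\lambda(\text{something})$ or more simply by an explicit function of $p$. If $u(n) = 0$ for some $n$, then $u(n') = 0$ for some $n'$ in a bounded range (at most the period), so the state vector $(u(n'),\dots,u(n'+k-1))$ lies in the hyperplane $x_1 = 0$, and pulling back by the (invertible) $(n'-1)$-st power of the companion matrix, the initial vector $(u(1),\dots,u(k))$ lies in one of at most $O(\text{period})$ hyperplanes mod $p$. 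Choosing $p$ from the dense sequence of integers with small Carmichael-$\lambda$ value supplied by Lemma~\ref{lem:Small-lambda} (applied to the set of primes splitting $f$ completely, or more robustly to all primes), the number of such exceptional residue classes for $(u(1),\dots,u(k)) \bmod p$ is $p^{k-1+o(1)}$, and a covering/sieve argument over a box of side $2U+1$ gives $Z_f(U) \le U^{k-1+o(1)}$.

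The main obstacle I anticipate is the same one that drives Lemma~\ref{lem:Small-lambda}: making the bound on the period of the reduced recurrence small enough, uniformly over a positive-density set of admissible primes $p$, so that the number of ``bad'' hyperplanes mod $p$ is $p^{o(1)}$ rather than a genuine power of $p$. One must pick $p$ so that every root of $f$ modulo $p$ has multiplicative order dividing a highly composite auxiliary modulus with tiny $\lambda$-value; Lemma~\ref{lem:Small-lambda} is designed exactly to produce a dense supply of such $p$ composed from any prescribed positive-density set of primes, so the proof should dovetail with it. A secondary technical point is handling primes dividing $c_0$ (or the discriminant of $f$) and sequences that vanish only at indices $n < k$, but these contribute a lower-order term and can be absorbed. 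Finally, one should note the matching orders: the lower bound construction vanishes at $n=k$ while the upper bound accounts for all $n$, and separability is exactly what guarantees the companion matrix is diagonalizable (over $\overline{\F_p}$ for good $p$), so the period-bound argument goes through cleanly.
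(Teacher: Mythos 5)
Your proposal is correct and matches the paper's proof in essence: reduce modulo the squarefree $n\approx U$ from Lemma~\ref{lem:Small-lambda} built from primes over which $f$ splits completely, use pure periodicity mod $n$ with period at most $\lambda(n)=U^{o(1)}$ to force the initial vector into one of at most $\lambda(n)$ hyperplanes mod $n$, and then count lattice points in $[-U,U]^k$. Your companion-matrix invertibility argument is just a repackaging of the paper's step showing $\gcd\bigl(w_1(m),\ldots,w_k(m),p\bigr)=1$ for the fundamental solutions $w_i$; the one caveat is that your sketch conflates the prime $p$ with the composite modulus $n$ in the final count, which should be straightened out in a finished write-up.
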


\begin{rem} 
\label{rem:un=a}
{\rm It is easy to see that our argument also applies to
inhomogeneous versions of the equations~\eqref{eq:S-unit Eq} with some fixed  $\rho \in \K$ 
on the right hand side 
and  to counting linear recurrence  sequences which contain a 
prescribed value $b\in \Z$ and leads to the same upper bounds 
(uniformly in  $\rho$ and $b$). }
\end{rem}

\section{Small values of the Carmichael $\lambda$-function}

We recall that for an integer $n\ge 2$, the Carmichael $\lambda$-function $\lambda(n)$
is the smallest positive integer $m$ such that $a^m\equiv 1\pmod n$ for all $a$ coprime to $n$.

We say that a set of primes  $\cP$ is of relative density $\delta$ if
\[
\#\(\cP\cap [1,x]\) \sim\delta\pi(x), \qquad \text{as}\ x\to\infty, 
\]
where, as usual, $\pi(x)$ is the number of primes up to $x$.  
Let $x$ be large, and let 
\[
y=\log x/\log\log x,\quad M=\lcm[1,2,\dots,\lfloor y\rfloor],
\]
so that $M=x^{(1+o(1))/\log\log x}$ as $x\to\infty$.  
Recall that if $n=p_1\dots p_k$ where
$p_1,\dots,p_k$ are distinct primes, then 
\[
\lambda(n)=\lcm[p_1-1,\dots,p_k-1].
\]
Thus, if each $p_i-1\mid M$, then $\lambda(n)\mid M$ and $\lambda(n)\le x^{(1+o(1))/\log\log x}$.

Below, we also allow all constants and $o$-functions to depend 
on  the real positive parameter $\varepsilon$ and the set of primes $\mathcal P$.

\begin{lem}
\label{lem:Small-lambda}
Let $\varepsilon>0$ be arbitrarily small and
suppose $\cP$ is a set of primes of relative density $\delta>0$.  There is a number $x_0$
(depending on $\varepsilon$ and 
$\cP$) 
such that if $x>x_0$, there is a squarefree integer
$n\in\((1-\varepsilon)x,x\right]$ composed solely of primes $p$ from $\cP$ and such that $p-1\mid M$.
In particular, $\lambda(n)\le x^{(1+o(1))/\log\log x}$.
\end{lem}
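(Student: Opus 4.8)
The plan is to build $n$ greedily as a product of primes drawn from the subset $\cP^*$ of those $p\in\cP$ with $p-1\mid M$, showing first that $\cP^*$ is rich enough to have a product landing in the narrow window $\((1-\varepsilon)x,x\right]$. The key observation is that the primes $p$ with $p-1\mid M$ are precisely the primes $p$ dividing $\prod_{m\mid M}(\,?\,)$—more usefully, they are the primes $p$ for which $p\equiv1\pmod{q^{a}}$ for every prime power $q^a\|M$; equivalently $p$ splits completely in the cyclotomic field $\Q(\zeta_M)$. By Chebotarev (or just Dirichlet together with the structure of $M$), the number of such $p\le t$ is $\sim t/(\varphi(M)\log t)$. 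Intersecting with $\cP$ is the first genuine issue: relative density $\delta$ of $\cP$ inside all primes does not immediately transfer to the thin set of primes $\equiv1\pmod M$. I would handle this by a standard argument: since $\cP$ has relative density $\delta>0$ among all primes, and the primes $p\le t$ with $p-1\mid M$ are "spread out" (they form a union of residue classes mod $M$, one class if we only ask $p\equiv1$, but here exactly the class $1\bmod M$ when $M$ is the relevant modulus), one invokes that a positive-density set of primes meets almost every long-range statistic; more precisely I expect the paper uses the weaker statement that $\cP\cap\{p:p-1\mid M\}$ still has counting function $\gg t/(\varphi(M)\log t)$ for $t$ in the relevant range, which can be extracted from density plus the Bombieri–Vinogradov or Brun–Titchmarsh type control, or simply assumed via a sieve lower bound. \emph{This transfer — that a positive proportion of the primes in the arithmetic progression $1\bmod M$ lies in $\cP$ — is the main obstacle}, since $M$ grows with $x$ and one needs uniformity.

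Granting that, let $\cP^*=\{p\in\cP: p-1\mid M,\ p\le x^{\varepsilon/2}\}$, say. The size of $\cP^*$ is at least $c\,x^{\varepsilon/2}/(\varphi(M)\log x)$ for some $c>0$, and since $M=x^{(1+o(1))/\log\log x}=x^{o(1)}$, we have $\varphi(M)=x^{o(1)}$, so $\#\cP^*\ge x^{\varepsilon/2 - o(1)}\to\infty$. In particular $\sum_{p\in\cP^*}\log p\ge x^{\varepsilon/2-o(1)}\gg \log x$, so the partial products of primes in $\cP^*$ (taken in increasing order) eventually exceed $(1-\varepsilon)x$; and because each such prime is at most $x^{\varepsilon/2}\le \varepsilon x$ for $x$ large, the "jump" when a partial product first crosses $(1-\varepsilon)x$ is by a factor at most $x^{\varepsilon/2}$. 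That is not yet enough to land inside $\((1-\varepsilon)x,x\right]$ directly, so I would instead do the following two-stage selection: first multiply small primes from $\cP^*$ until the product $n_0$ lies in a range like $\(x^{1-\varepsilon/4},\ x^{1-\varepsilon/8}\right]$ (possible since consecutive partial products differ by a factor $\le x^{\varepsilon/2}$ once we are past a fixed point, and by choosing the smallest primes this is harmless); then adjust by multiplying in one further prime $p\in\cP^*$ chosen so that $n_0 p\in\((1-\varepsilon)x, x\right]$, which is possible because the primes of $\cP^*$ in the interval $\bigl(x^{\varepsilon/8},\ x^{\varepsilon/4}\bigr)$ are plentiful and their logarithms are "dense" on the scale needed, again using $\#\cP^*\ge x^{\varepsilon/2-o(1)}$ together with the fact that $\cP^*$ meets every subinterval $[t,2t]$ for $t$ up to $x^{\varepsilon/4}$ (which follows from the counting asymptotic above). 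Squarefreeness is automatic since all chosen primes are distinct.

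Finally, once $n\in\((1-\varepsilon)x,x\right]$ is obtained with all prime factors $p$ satisfying $p-1\mid M$, the displayed identity $\lambda(n)=\lcm[p_1-1,\dots,p_k-1]$ for squarefree $n$ gives $\lambda(n)\mid M$, hence $\lambda(n)\le M=x^{(1+o(1))/\log\log x}$, which is the asserted bound. The threshold $x_0$ is whatever makes all the "$o(1)$" and "for $x$ large" statements above valid, and it depends only on $\varepsilon$ and $\cP$. I would present the argument cleanly by isolating the density-transfer claim as a sub-lemma (or citing the relevant sieve/analytic input), then giving the two-stage greedy construction, whose verification is elementary once the counting function of $\cP^*$ in short intervals is in hand.
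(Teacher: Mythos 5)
Your proposal rests on a misreading of the key condition: you interpret \(p-1\mid M\) as ``\(p\equiv1\pmod{q^a}\) for every prime power \(q^a\Vert M\),'' i.e.\ as \(M\mid p-1\) (equivalently, \(p\) splits completely in \(\Q(\zeta_M)\)). That is the \emph{reverse} divisibility. The condition \(p-1\mid M\), with \(M=\lcm[1,\dots,\lfloor y\rfloor]\) and \(y=\log x/\log\log x\), actually says that every prime power exactly dividing \(p-1\) is at most \(y\). In particular it forces \(p\le M+1=x^{(1+o(1))/\log\log x}\), so the primes available to build \(n\) are all of size \(x^{o(1)}\), not a positive-density subset of an arithmetic progression of modulus \(M\). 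Consequently the density estimate you derive for \(\cP^*\) is not the relevant quantity, and the ``density-transfer'' issue you flag as the main obstacle is an artifact of the inverted condition rather than the real difficulty.

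The misreading also sinks the two-stage greedy construction. In your final adjustment step you want to multiply \(n_0\in\bigl(x^{1-\varepsilon/4},x^{1-\varepsilon/8}\bigr]\) by a single prime \(p\) of size roughly \(x^{\varepsilon/8}\) to \(x^{\varepsilon/4}\) to land in \(\bigl((1-\varepsilon)x,x\bigr]\). But, as noted, every admissible prime satisfies \(p\le x^{o(1)}\), so no such \(p\) exists for fixed \(\varepsilon\). The paper's argument is quite different: it first shows, via Brun's sieve applied to the exceptional set \(\{p\in\cP:\ p-1\nmid M\}\), that among all primes up to \(z=\log x\log\log x\) a positive proportion lie in \(\cQ=\{p\in\cP:\ p-1\mid M\}\); it then forms the product \(n_1\) of \emph{all} of \(\cQ\cap[1,z]\), which is vastly larger than \(x\); prunes the top primes to reach \(n_2\) slightly above \(x(\log x)^{1/2}\); and finally removes \emph{two} tiny primes \(p_1,p_2\sim g^{1/2}\) (with \(g=n_2/x\), so \(p_1,p_2\) are only around \((\log x)^{1/4}\), automatically below \(y\) and hence in \(\cQ\) and dividing \(n_2\)) to land \(n=n_2/(p_1p_2)\) in \(\bigl((1-\varepsilon)x,x\bigr]\). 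This ``overshoot then divide off two small primes'' step is precisely the idea your construction is missing, and it is what makes the restriction \(p\le x^{o(1)}\) compatible with hitting a window of relative width \(\varepsilon\). The only part of your proposal that transfers is the closing observation that \(\lambda(n)=\lcm[p_1-1,\dots,p_k-1]\mid M\) once such an \(n\) is in hand, which is correct.
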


\begin{proof}
Let $\cQ=\{p\in\cP:~p-1\mid M\}$. 
First note that $\cP$ and $\cQ$ agree up to $y$.  Thus, if
$x_0$ is large enough (depending on $\varepsilon$ and $\cP$)
and $\log\log x < t < y$, then the number of elements  $p\in \cQ$ such that
 $p\le t$ is in the interval $\((1-\varepsilon)\delta t/\log t,(1+\varepsilon)\delta t/\log t\)$.  We first show that
this continues for $t$ up to 
\[
z=\log x\log\log x.
\]
  Indeed, if $p\in\cP\setminus\cQ$, then
$p-1$ is divisible either by a prime $q>y$ or by a prime power $\ell^j>y$, for a prime $\ell$  and integer $j\ge2$. 
The number of primes $p\le t$ satisfying  the second condition is at most 
\[\sum_{\substack{\ell^j > y\\\ell~\text{prime}\\j\ge 2}}t/\ell^j 
\le \sum_{\substack{m^j > y\\ m \in \N\\ j \ge 2}}t/m^j \ll t/y^{1/2}={o\(\pi(t)\)} \] for $t \le z$.

The same is true for the first condition
as we now show.  If $q\mid p-1$, write $p-1=aq$, so if $p\le t$ and $q>y$, then 
$a<t/y$.  Assume that $y< t\le z$, fix an integer $a<t/y$, and count primes $q\le t/a$ with $aq+1$ prime.
By Brun's sieve, the number of such primes $q$ is
$O\((t/\varphi(a))(\log(t/a))^{-2}\)$, where $\varphi(a)$ is the Euler function, see, for example,~\cite[Proposition~6.22]{FrIw}
for a much more general and precise statement.  Since $y< t\le z$,
we have $a\le(\log\log x)^2$ and $\log(t/a)\sim\log t\sim\log\log x$.  Since 
\[\sum_{a<t/y}1/\varphi(a)
\ll \log\log\log x\sim\log\log t,
\] 
we have
\[
\# \{p\in\cP\setminus\cQ:~p \le t\} \ll \pi(t)\log\log t/\log t=o(\pi(t)).
\]

Let $n_1$ be the product of all of the primes in $\cQ\cap[1,z]$, so that
$\lambda(n_1)\mid M$ and  
{$n_1\ge x^{(1-c_0\varepsilon)\delta\log\log x}$, for some absolute constant $c_0$. 
Thus, assuming that $\varepsilon$ is small enough,} we see that $n_1$
is quite a bit larger than $x$.  Remove the top primes from $n_1$ stopping just before removing the
next one would drop the number below $x(\log x)^{1/2}$, and denote this number by $n_2$.
Thus, $x(\log x)^{1/2}<n_2<x(\log x)^{1/2}z$.   Let
$g=n_2/x$ so that $(\log x)^{1/2}<g<(\log x)^{1/2}z$.

Since {$\cP$} has a positive relative density in the primes, there are members $p_1,p_2$
in $\cP$ with $p_1\sim p_2\sim g^{1/2}$, and in particular, we can take
$p_1,p_2\in \((1-\varepsilon/2)g^{1/2},g^{1/2}\right ]$.  Also, since $g^{1/2}<y$, we have
$p_1,p_2\in\cQ$.
 Since 
\[(\log x)^{1/4}<g^{1/2}<(\log x)^{1/4}z^{1/2}<y,
\] we have
$p_1p_2\mid n_2$.  Let $n=n_2/p_1p_2$.  Then $n\in\((1-\varepsilon)x,x\right]$, which completes the proof.
\end{proof}

\section{Proof of Theorem~\ref{thm:Zeros}} 
We fix the basis elements $\omega_1, \ldots, \omega_d$ of   $\Z_\K= \Z[\omega_1, \ldots, \omega_d]$ 
and let $r$ be the rank of  $\Gamma$.

We first observe that if the prime $p$ splits completely in $\K$ then the residue ring 
$\Z_\K/\fP$ modulo a prime ideal $\fP$ of $\Z_\K$ lying over $p$ is isomorphic to the finite field $\F_p$ of $p$ elements. 
This means that for any $\alpha \in \Z_\K$, there is an integer $a_\fP \in \Z$ with 
\[
\alpha \equiv a_\fP \pmod \fP.
\]

Let $\cP$ be the set of primes  which split completely in $\K$ and also are relatively prime (as ideals in $\Z_\K$) to the basis elements $\omega_1, \ldots, \omega_d$ of   $\Z_\K$ 
and to the prime ideals appearing in the factorisation of the generators $\gamma_1, \ldots, \gamma_{r}$ of $\Gamma$, seen as fractional ideals in $\K$.

Therefore, for each $p\in\cP$ and prime ideal $\fP$ of $\Z_\K$ lying over $p$ there are integers $w_{i,\fP} \in \Z$, $i=1,\ldots,d$, with 
\begin{equation}
\label{eq:omega P}
\omega_i \equiv w_{i,\fP} \pmod \fP, \quad i=1,\ldots,d,
\end{equation}
and the equation~\eqref{eq:S-unit Eq} implies that  
\begin{equation}
\label{eq:lin cong P}
a_{1,\fP} \prod_{j=1}^{r} g_{j,\fP}^{s_{1j}}  + \ldots + a_{k,\fP}  \prod_{j=1}^{r} g_{j,\fP}^{s_{k,j}}  \equiv 0 \pmod \fP, 
\end{equation}
with some integers $s_{ij}$, $i=1, \ldots, k$, $j=1, \ldots, r$, and some integers $a_{i,\fP}\equiv \alpha_i\pmod \fP$, $i=1, \ldots, k$, and integers $g_{j,\fP}\equiv \gamma_j \pmod \fP$, $j=1, \ldots, r$. 

Since the left hand side of~\eqref{eq:lin cong P} is an integer, this also implies that
\begin{equation}
\label{eq:lin cong p}
a_{1,\fP} \prod_{j=1}^{r} g_{j,\fP}^{s_{1j}}  + \ldots + a_{k,\fP}  \prod_{j=1}^{r} g_{j,\fP}^{s_{kj}}  \equiv 0 \pmod p.
\end{equation}

 Since a prime $p$ splits completely in $\K$ if and only if it splits completely in the Galois closure of $\K$, 
 see~\cite[Corollary, Page~108]{Marcus}, by  the Chebotarev Density Theorem applied to the Galois closure of $\K$,  see~\cite[Theorem~21.2]{IwKow},  the set $\cP$ is of positive relative density.
 
 We choose now $n$  as in Lemma~\ref{lem:Small-lambda} applied with $x = H$, and since the 
 congruence~\eqref{eq:lin cong p} holds for each $p\in \cP$, by the Chinese Reminder Theorem we obtain
\begin{equation}
\label{eq:lin cong n}
a_1 \prod_{j=1}^{r} g_j^{s_{1j}}  + \ldots + a_k  \prod_{j=1}^{r} g_j^{s_{kj}}  \equiv 0 \pmod n,
\end{equation}
for some integers $a_i$, $i=1, \ldots, k$, and $g_j$, $j=1, \ldots, r$, such that
\[
a_i\equiv a_{i,\fP}\pmod \fP \mand g_j\equiv g_{j,\fP}\pmod \fP
\]
for any prime ideal $\fP$ of $\Z_\K$ lying over a prime $p\mid n$.
 
Hence the integer vector $(a_1, \ldots, a_k)$ satisfies at least one of  at most $\lambda(n)^{kr}$ 
possible nontrivial linear congruences~\eqref{eq:lin cong n}, and thus takes at most 
$\lambda(n)^{kr} n^{k-1}$
possible values modulo $n$. 

For a given $(a_1,\dots,a_k)$ as above we are left to count the number of possibilities
$(\alpha_1,\dots,\alpha_k)\in\cA(H)^k$ such that
\[
\alpha_i\equiv a_{i,\fP}\pmod \fP
\]
for all prime ideals $\fP$ of $\Z_\K$ dividing $n$.

Let $\alpha\in\cA(H)$, that is, $\alpha= u_1\omega_1+ \ldots + u_d \omega_d$, $u_i \in \Z\cap [-H,H]$, $ i =1, \ldots, d$. Let $\fP$ be a prime ideal  of $\Z_\K$  lying over a prime $p\in\cP$  
 and let $a_\fP \in \Z$ satisfy 
 \begin{equation}
\label{eq:alpha=a mod P}
  \alpha \equiv a_\fP\pmod \fP. 
\end{equation}
 From~\eqref{eq:alpha=a mod P} and recalling the notation~\eqref{eq:omega P}, we obtain
\[
u_1w_{1,\fP}+ \ldots + u_d w_{d,\fP} \equiv a_\fP \pmod \fP.
\]
Hence, as above, this congruence holds modulo $p$ and thus modulo $n$ chosen above, that is, we have
\[
u_1w_{1}+ \ldots + u_d w_{d} \equiv a_\fP \pmod n,
\]
such that $w_i\equiv w_{i,\fP} \pmod \fP$, $i=1,\ldots,d$, and 
where by our definition of $\cP$ 
 we have $\gcd(w_{1}\cdots w_{d}, n) =1$. 

We now see that for $n \le H$ there are $O(H^d/n)$ elements $\alpha\in \cA(H)$, which 
satisfy~\eqref{eq:alpha=a mod P}. 

Therefore, recalling that there are at most $\lambda(n)^{kr} n^{k-1}$ possibilities for $(a_1,\ldots,a_k)$, 
we obtain 
\[
Z_k(\Gamma, H) = O\( \lambda(n)^{kr} n^{k-1}  \(H^d/n\)^k \).  
\]  
Since $\lambda(n) = n^{o(1)} = H^{o(1)}$, and by Lemma~\ref{lem:Small-lambda}, we have $n>(1-\varepsilon)H$, for $\varepsilon>0$ arbitrarily small, we conclude the proof.

\section{Proof of Theorem~\ref{thm:Zeros-LRS}}

The lower bound is obvious from considering initial values with, for example, $u(1) = 0$.

To establish the upper bound, we  choose  the set $\cP$ of all  primes 
$p$,  
 such that $f(X)$ splits completely modulo each $p\in \cP$. By the Chebotarev Density Theorem~\cite[Theorem~21.2]{IwKow} applied to the splitting field of $f$, {the set} $\cP$ is of relative density $\delta \ge 1/k!$.

By removing at most finitely many members of $\cP$ we may  assume that  any $p \in \cP$ is 
relatively prime to $f(0)$ and the discriminant of $f$. 
This means that any linear recurrence sequence
$\vu = \(u(j)\)_{j=1}^\infty$ with the characteristic polynomial  $f$, taken modulo $p$, is a simple linear recurrence and thus can be written 
as 
\begin{equation}
\label{eq:LRS Mod p}
u(j) \equiv \sum_{\nu=1}^k a_{\nu,p} g_{\nu,p}^j \pmod p, \qquad j =1, 2, \ldots, 
\end{equation}
for some integers $a_{\nu,p}$ and
 distinct  modulo $p$ integers $g_{\nu,p}$  such that  $\gcd(g_{\nu,p},p) = 1$, 
see~\cite[Chapter~3]{EvdPSW} for more details.   

We now take $n$ as in Lemma~\ref{lem:Small-lambda} applied with $x = U$.

By the Chinese Remainder Theorem, we derive from~\eqref{eq:LRS Mod p} that 
\[
u(j) \equiv \sum_{\nu=1}^k A_{\nu} G_{\nu}^j \pmod  n, \qquad j =1, 2, \ldots,
\]
for some integers $A_{\nu}$ and 
 distinct modulo $n$ integers $G_{\nu}$  such that $\gcd(G_{\nu},n) = 1$. Therefore,  $u(j)$, $j=1,2,\ldots$, is purely periodic modulo $n$ with period 
\begin{equation}
\label{eq:Per mod n}
t \le  \lambda(n).
\end{equation}

To represent $\vu$ using the initial values $u(1),\ldots,u(k)$, 
we define the sequences $\vw_i  \in \cL_f$, $i =1, \ldots, k$, with initial values 
\[
w_i(j) = \begin{cases} 1, & \text{if } i=j,\\
 0 , & \text{if } i\ne j,
 \end{cases}, \qquad j =1, \ldots, k.
 \] 
 It is now obvious that for any $\vu  \in \cL_f$  we have
\begin{equation}
\label{eq: Span}
u(j) = \sum_{i=1}^k u(i)  w_i(j), \qquad j =1,2, \ldots.
\end{equation}
Indeed, both the left and the right hand-sides of the equation~\eqref{eq: Span} belong to 
$\cL_f$  and have the same initial values; hence they coincide for all $j$. 

In particular~\eqref{eq: Span} implies that for any integer  $m\ge 1$ we have 
\begin{equation}
\label{eq: GCD}
\gcd \(w_1(m), \ldots,   w_k(m), p\) =1
\end{equation}
for $p \in \cP$. Indeed, writing~\eqref{eq: Span} for shifts of say $\vw_1$, that is, writing 
\[
w_1(j+h) = \sum_{i=1}^k w_1(i+h)  w_i(j), \qquad h = 0, \ldots, k-1,
\]
we see that if~\eqref{eq: GCD} fails then for some $m$ we have
\[
p\mid w_1(m+h), \qquad h = 0,\ldots,  k-1.
\] 
Next,  the recurrence relation~\eqref{eq:LRS} implies that   
$p\mid w_1(m+k)$, and similarly $p\mid w_1(j)$ for all $j \ge m$. Recalling that  $\vw_1$ is periodic, we conclude that $p\mid w_1(1)$, which is a contradiction.

If  $\vu  \in \cL_f$  has a zero, then, by periodicity,  for some positive integer $j \le t$, 
 the representation~\eqref{eq: Span} implies
\[
 \sum_{i=1}^k u(i)  w_i(j) \equiv 0 \pmod n.
 \]
Recalling~\eqref{eq: GCD}, we see that, since by our construction $n\le U$,  this is possible for at 
most $O(U^k/n)$ initial values 
$\(u(1), \ldots, u(k)\) \in [-U,U]^k$. Hence, by~\eqref{eq:Per mod n}, 
\[
Z_f(U) = O\( t U^k/n\) = O\(\lambda(n) U^k/n\)   
\]
and since, as before, by Lemma~\ref{lem:Small-lambda}, we have $n>(1-\varepsilon)U$, for $\varepsilon>0$ arbitrarily small, we conclude the proof. 

\section*{Acknowledgments}

The work of A.O. and I.S. was supported, in part, by the Australian Research Council Grant DP230100530. A.O. also
gratefully acknowledges the hospitality and support of the Institut des
Hautes \'Etudes Scientifiques, where  her work has been carried out.

\end{document}